\documentclass[letterpaper, 10pt, conference]{ieeeconf}
\IEEEoverridecommandlockouts

\usepackage{lipsum}
\usepackage{cite}
\usepackage{amsmath,amssymb,amsfonts}
\usepackage{algorithm,algorithmic}
\usepackage{hyperref}
\hypersetup{hidelinks=true}
\usepackage{textcomp}
\makeatother

\usepackage{graphicx}
\usepackage{amsthm}
\usepackage{tabu}
\usepackage{breqn}
\usepackage{verbatim}
\usepackage{esvect}
\usepackage{footnote}
\usepackage{siunitx}
\usepackage{color}
\usepackage{url}
\usepackage[makeroom]{cancel}
\usepackage{todonotes}
\usepackage{subcaption}

\newtheorem{theorem}{Theorem}

\newtheorem{definition}{Definition}
\newtheorem{lemma}[theorem]{Lemma}
\newtheorem{proposition}[theorem]{Proposition}
\newtheorem{remark}{Remark}
\newtheorem*{problem statement}{Problem Statement}
\newtheorem{problem}{Problem}

\newcommand{\real}{\mathbb{R}}

\newcommand{\Cc}{{\mathcal{C}}}

\newcommand{\Kc}{{\mathcal{K}}}

\newcommand{\bS}{{\mathbf{S}}}
\newcommand{\bx}{{\mathbf{x}}}

\newcommand{\bX}{{\mathbf{X}}}

\newcommand{\ba}{{\mathbf{a}}}

\newcommand{\bg}{{\mathbf{g}}}

\newcommand{\bu}{{\mathbf{u}}}

\newcommand{\bv}{{\mathbf{v}}}
\newcommand{\bc}{{\mathbf{c}}}

\newcommand{\bd}{{\mathbf{d}}}
\newcommand{\bk}{{\mathbf{k}}}

\newcommand{\bbf}{{\mathbf{f}}}
\newcommand{\bpsi}{{\boldsymbol{\psi}}}
\newcommand{\bPsi}{{\boldsymbol{\Psi}}}

\newcommand{\bmu}{{\boldsymbol{\mu}}}
\newcommand{\bell}{{\boldsymbol{\ell}}}

\newcommand{\bA}{{\mathbf{A}}}
\newcommand{\bB}{{\mathbf{B}}}

\newcommand{\bG}{{\mathbf{G}}}
\newcommand{\bK}{{\mathbf{K}}}

\newcommand{\bI}{{\mathbf{I}}}

\newcommand{\bL}{{\mathbf{L}}}

\newcommand{\bM}{{\mathbf{M}}}
\newcommand{\bP}{{\mathbf{P}}}

\newcommand{\bC}{{\mathbf{C}}}

\newcommand{\bQ}{{\mathbf{Q}}}
\newcommand{\bR}{{\mathbf{R}}}
\newcommand{\bY}{{\mathbf{Y}}}
\newcommand{\bT}{{\mathbf{T}}}

\newcommand{\bGamma}{{\boldsymbol{\Gamma}}}
\newcommand{\bPhi}{{\boldsymbol{\Phi}}}
\newcommand{\bzero}{{\boldsymbol{0}}}

\newcommand\xqed[1]{%
  \leavevmode\unskip\penalty9999 \hbox{}\nobreak\hfill
  \quad\hbox{#1}}
\newcommand\demo{\xqed{$\bullet$}}

\newcommand{\longthmtitle}[1]{\mbox{}\emph{(#1):}}
\newcommand{\setdef}[2]{\{#1 : #2\}}

\newcommand{\norm}[1]{\left\lVert#1\right\rVert}

\allowdisplaybreaks
\renewcommand{\baselinestretch}{.98}

\begin{document}
\title{\LARGE \bf Safe Stabilizing Linear Feedback: \\ Necessary and Sufficient Conditions, Optimality, and Margins}
\author{Pol Mestres, Shima Sadat Mousavi, Pio Ong, and Aaron D. Ames
\thanks{The authors are with the Department of Mechanical and Civil
Engineering, California Institute of Technology, Pasadena, CA 91125, USA.
Emails: \texttt{mestres,smousavi,pioong,ames@caltech.edu}.
This research is supported by The Boeing Company.}
}
\maketitle

\begin{abstract}
Control barrier functions (CBFs) have become an important controller design tool for autonomous systems subject to safety constraints. Despite their popularity, recent works have shown that CBF-based controllers can destabilize the internal dynamics of the system.
In this paper, we consider linear systems with affine safety constraints and design linear feedback controllers that satisfy high-order CBF (HOCBF) constraints while rendering the origin globally exponentially stable.
We first characterize the exact class of all linear gain matrices that globally satisfy the HOCBF constraints, including necessary and sufficient conditions for when this class is nonempty. Then, by leveraging the recently introduced notion of CBF output dynamics and CBF internal dynamics, we provide the necessary and sufficient conditions for the existence of stabilizing gain matrices within that class. 
Finally, we show that Linear Quadratic Regulator (LQR) and robust control problems can be solved  while being constrained within this class of safe and stabilizing gain matrices, through standard linear control techniques such as algebraic Riccati equations (AREs) and Linear Matrix Inequalities (LMIs). 
We illustrate our results in a simulation example.
\end{abstract}

\section{Introduction}
Complex autonomous systems such as humanoid robots or aerospace vehicles are subject to strict safety constraints.
Control barrier functions (CBFs)~\cite{ADA-SC-ME-GN-KS-PT:19} have become a popular tool to design controllers that satisfy such safety constraints, and have been applied to a wide range of tasks including humanoid walking~\cite{SH-XX-ADA:15} or aircraft flight~\cite{MM-EL:26,AWS-MHC-TGM-ADA:26}.

However, recent works~\cite{JJC-CJT-SS-KS:25,LB-SZ-APS:26,NM-JC-PS-KZ:25,PM-YC-EDA-JC:26-jnls,PM-SSM-ADA:26,SSM-PM-ADA:26} have shown that controllers designed to satisfy CBF constraints can destabilize the system dynamics.
More concretely,~\cite{NM-JC-PS-KZ:25,PM-YC-EDA-JC:26-jnls,PM-SSM-ADA:26,SSM-PM-ADA:26} show that these instabilities can arise even in the case where the controller is designed through so-called \textit{CBF-based safety filters}, which minimally modify a nominal stabilizing controller to satisfy the CBF constraints.
Hence, despite their guaranteed safety, this lack of stability guarantees compromises the use of CBF-based controllers altogether.

Although the works~\cite{ZM-FB-AGA:24,PM-SSM-ADA:26,SSM-PM-ADA:26} provide conditions for stability of the closed-loop system obtained from CBF-based controllers, these can be conservative, specially for multiple CBF constraints. 
In general, the nonlinear nature of the filtered controller significantly complicates the analysis of the closed-loop dynamical properties and makes a full characterization of the stability properties of safety filters an open problem (even for linear systems and affine CBFs).

On the other hand, the nonlinear controllers obtained from CBF-based safety filters pose a difficulty when analyzing robustness metrics for linear systems such as phase and gain margins, since those are often defined for linear controllers~\cite{SSM-PM-ADA:26-robustness}.
This presents a difficulty when implementing safety filters in applications where such robustness metrics need to be certified, such as flight control systems, where they are increasingly being implemented~\cite{MM-EL:26,AWS-MHC-TGM-ADA:26}.

Motivated by the difficulty of studying the stability properties of safety filters and the nonlinear nature of their induced controllers, 
in this paper we seek to find \textit{linear} controllers that simultaneously certify safety and stability.
To do so, we follow an approach closely aligned with the one in~\cite{JJC-CJT-SS-KS:25}, which draws inspiration from the input-output (IO) linearization literature (cf.~\cite[Chapter 5]{AI:95}) and considers the CBF as an output of the system. By doing so, the dynamics can be rewritten as the union of the so-called \textit{CBF output dynamics} and \textit{CBF internal dynamics}.
This analysis shows that the stability of the CBF internal dynamics 
guarantees the stability of the full nonlinear control system under a CBF-based controller.

The contributions of this paper are as follows. First, we 
characterize the set of linear controllers that satisfy a set of affine CBF constraints for a linear system.
Second, we specialize the analysis done in~\cite{JJC-CJT-SS-KS:25} for linear systems and affine CBFs. 
In this case, we show that the CBF internal dynamics are linear, and we derive a characterization of their stabilizability. 
This analysis reveals necessary and sufficient conditions for the existence of linear stabilizing controllers satisfying a set of CBF constraints, and provides a characterization of all such controllers.
Third, we use this characterization to solve a Linear Quadratic Regulator (LQR) problem to find the optimal controller within this set of safe and stabilizing controllers, and show that the solution can be obtained through an algebraic Riccati equation (ARE).
Fourth, we leverage robust control techniques to obtain linear CBF-constrained controllers with provable gain and phase margin guarantees through a set of convex constraints.~\footnote{Notation: Throughout the paper, we denote by $\mathbb{N}$, $\mathbb{Z}_{>0}$ $\real$, $\real_{\geq0}$ the set of natural, positive integer, real, and non-negative real numbers, respectively. We use bold (respectively, non-bold) symbols to represent vectors (respectively, scalars).
Given $n\in\mathbb{N}$, we let $\mathbf{0}_n$ be the zero vector in $\real^n$ and write $[n] = \{ 1, 2, \hdots, n \}$. A function $\alpha:\real\to\real$ is of extended class $\Kc$ if it is continuous, strictly increasing, and satisfies $\alpha(0) = 0$.
Given $\ba_1, \hdots, \ba_m \in \real^n$, $\bA = [\ba_1; \hdots; \ba_m]\in\real^{m\times n}$ denotes the matrix whose $i$-th row is $\ba_i$ (for $i\in[m]$).
We denote by $\mathbb{S}_{++}^n\subset\real^{n\times n}$ the set of $n\times n$ symmetric positive definite matrices.
Given a complex-valued matrix $\tilde{\bC}\in\mathbb{C}^{n\times n}$, $\sigma(\tilde{\bC})$ denotes the maximum singular value of $\tilde{\bC}$.
Given $\bG:\mathbb{C}\to\mathbb{C}^{n\times n}$, we write $\norm{\bG}_{\infty} := \sup\limits_{\omega\in\real} \sigma( \bG(j\omega) )$.
Let $k\in\mathbb{Z}_{>0}$, $\{ n_i\in\mathbb{Z}_{>0} \}_{i=1}^k$, and $\bA_i \in \real^{n_i \times n_i}$ for $i\in[k]$.
Further let $\bar{n} = \sum_{i=1}^k n_i$.
Then, $\bar{\bA} = \text{blkdiag}(\{ \bA_i \}_{i=1}^k) \in \real^{\bar{n}\times\bar{n}}$ is the block diagonal matrix formed  by aligning the matrices $\{ \bA_i \}_{i=1}^k$ along the diagonal of $\bar{\bA}$.
}

\section{Background}

\subsection{Control barrier functions for control-affine systems}

Consider a control-affine system:
\begin{align}\label{eq:control-affine-system}
    \dot{\bx} = \bbf(\bx) + \bg(\bx) \bu,
\end{align}
where $\bx\in\real^n$ is the state, $\bu\in\real^m$ the control input, and $\bbf:\real^n\to\real^n$, $\bg:\real^n\to\real^{n\times m}$ are sufficiently smooth.
We consider $p\in\mathbb{N}$ safety constraints defined by sufficiently smooth functions $\{ h_i:\real^n\to\real \}_{i=1}^p$ as:
\begin{equation}\label{eq:safety_constraint}
    \Cc = \setdef{\bx\in\real^n}{h_i(\bx) \geq 0, \ \forall i\in[p]}.
\end{equation}
Ideally, we would like to enforce safety for system~\eqref{eq:control-affine-system} by designing a controller that renders $\Cc$ forward invariant. However, depending on the system dynamics and the constraint functions, rendering all of $\Cc$ forward invariant may not be possible. One concept that helps formalize this limitation is the \textit{relative degree} of the constraint function.
\begin{definition}\longthmtitle{Relative degree}
    A sufficiently smooth function $h:\real^n\to\real$ has relative degree $r\in\mathbb{Z}_{>0}$ on a set $\mathcal{D}$ with respect to system~\eqref{eq:control-affine-system} if, for each $\bx\in\mathcal{D}$, $L_{\bg}L_{\bbf}^{r-1}h(\bx) \neq \mathbf{0}_m^\top$  and $L_{\bg}L_{\bbf}^k h(\bx) = \mathbf{0}_m^\top$ for all $k < r-1$.~\hfill$\diamond$
\end{definition}
The high-order control barrier function (HOCBF) framework provides a tool for enforcing safety constraints with an arbitrary relative degree. The following HOCBF construction produces a control invariant (when there are no input bounds) set $\bar\Cc_i$  associated with each constraint~$h_i$.


\begin{theorem}\longthmtitle{HOCBF~\cite[Thm. 4]{WX-CB:22}}\label{thm:hocbf}
    Consider the control-affine system~\eqref{eq:control-affine-system} with a safety constraint $\Cc$ in~\eqref{eq:safety_constraint}.
    Assume that, for each $i\in[p]$, $h_i$ has relative degree ${r_i\in\mathbb{N}}$ on $\real^n$. Given extended class-$\Kc$ functions $\{ \bar{\alpha}_{i,j} \}_{ i\in[p], j\in[r_i] }$, define recursively from $h_{i,0}(\bx)\triangleq h_i(\bx)$, for each ${j\in[r_i-1]}$:
    \begin{align}\label{eq:HOCBF}
        h_{i,j}(\bx)\triangleq L_\bbf{h}_{i,j-1}(\bx)+\bar{\alpha}_{i,j}(h_{i,j-1}(\bx)).
    \end{align}
    Then, with $\Cc_{i,j} \triangleq \setdef{\bx\in\real^n}{h_{i,j}(\bx) \geq 0}$,  their set intersection  $\bar{\Cc}_i \triangleq \bigcap_{j=0}^{r_i-1} \Cc_{i,j}$ is control invariant. Furthermore,
    %
    any locally Lipschitz controller $\bk:\real^n\to\real^m$ simultaneously satisfying HOCBF constraints:
    \begin{align}\label{eq:hocbf-inequality}
        L_{\bbf}h_{i,r_i-1}(\bx) + L_{\bg}h_{i,r_i-1}(\bx) \bk(\bx) + \bar{\alpha}_{i,r_i}(h_{i,r_i-1}(\bx)) \geq 0,
    \end{align}
    for all $i\in[p]$ and $\bx\in\bar{\Cc} \triangleq \bigcap_{i=1}^p \bar{\Cc}_i\subseteq\Cc$ renders $\bar{\Cc}$ forward invariant for the closed-loop system with feedback $\bu = \bk(\bx)$.
\end{theorem}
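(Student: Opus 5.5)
The plan is a standard Nagumo / comparison-lemma argument applied one layer at a time to the chain $h_{i,0},\dots,h_{i,r_i-1}$. Two facts are used throughout:
\begin{itemize}
\item by the relative-degree assumption, $L_\bg h_{i,j-1}\equiv \mathbf{0}_m^\top$ for $j\le r_i-1$;
\item $L_\bg h_{i,r_i-1}(\bx)=L_\bg L_\bbf^{r_i-1}h_i(\bx)\neq \mathbf{0}_m^\top$ for all $\bx$.
\end{itemize}

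The first fact rests on a short induction on $j$: $h_{i,j}=L_\bbf^j h_i+{}$(terms built from compositions of the $\bar\alpha$'s with $L_\bbf^k h_i$, $k<j$). Differentiating these terms gives contributions proportional to $L_\bg L_\bbf^k h_i$ with $k<j\le r_i-1$, and these vanish. I will assume the $\bar\alpha_{i,j}$ are smooth enough for the $h_{i,j}$ to be $C^1$, as the construction implicitly requires.

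\emph{Step 1: control invariance of $\bar\Cc_i$.} For each $\bx$, the scalar $L_\bg h_{i,r_i-1}(\bx)$ is nonzero, so $\bu$ can be chosen to make $L_\bbf h_{i,r_i-1}+L_\bg h_{i,r_i-1}\bu+\bar\alpha_{i,r_i}(h_{i,r_i-1})\ge0$. For example, the pointwise minimum-norm choice is locally Lipschitz because $L_\bg h_{i,r_i-1}\neq0$. This reduces control invariance to the second claim applied to the single constraint $i$.

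\emph{Step 2: forward invariance under the HOCBF constraints.} Let $\bk$ be locally Lipschitz and satisfy \eqref{eq:hocbf-inequality} on $\bar\Cc$. Take $\bx(0)\in\bar\Cc$ with its unique maximal solution $\bx(t)$. Along trajectories, for $j\le r_i-1$,
\[
\dot h_{i,j-1}=L_\bbf h_{i,j-1}=h_{i,j}-\bar\alpha_{i,j}(h_{i,j-1}),
\]
since $L_\bg h_{i,j-1}=0$. At the top level, \eqref{eq:hocbf-inequality} gives $\dot h_{i,r_i-1}\ge-\bar\alpha_{i,r_i}(h_{i,r_i-1})$ while $\bx(t)\in\bar\Cc$. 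Propagation is then backward down the chain:
\begin{itemize}
\item $\dot h_{i,r_i-1}\ge-\bar\alpha_{i,r_i}(h_{i,r_i-1})$ with $h_{i,r_i-1}(0)\ge0$ gives $h_{i,r_i-1}\ge0$ by the comparison lemma, since $y\equiv0$ solves $\dot y=-\bar\alpha(y)$.
\item With $h_{i,r_i-1}\ge0$, we get $\dot h_{i,r_i-2}\ge-\bar\alpha_{i,r_i-1}(h_{i,r_i-2})$, hence $h_{i,r_i-2}\ge0$.
\item Continuing in the same way yields $h_{i,0}\ge0$.
\end{itemize}

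\emph{Main obstacle.} The difficulty is circularity: \eqref{eq:hocbf-inequality} is only assumed on $\bar\Cc$, yet the comparison argument needs it along the trajectory before we know the trajectory stays in $\bar\Cc$. Also, extended class-$\Kc$ functions need not be Lipschitz, so the comparison lemma must be applied carefully.

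I would resolve this by a first-exit-time contradiction. Suppose $\bx$ leaves $\bar\Cc$, and let $t^*=\sup\{t:\bx([0,t])\subset\bar\Cc\}$. At $t^*$ some constraint $h_{i,j}$ reaches $0$ and becomes negative immediately afterwards. Pick the largest such index $j$ among the active, violated constraints for that $i$. For $j<r_i-1$, the next level satisfies $h_{i,j+1}(\bx(t^*))\ge0$. If it is strictly positive, continuity gives $\dot h_{i,j}>0$ near $t^*$, a contradiction. If it equals zero, we pass up the chain to the top level.

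At the top level, $\dot h_{i,r_i-1}(t^*)\ge0$ holds only at the boundary point, not on $(t^*,t^*+\epsilon)$. The cleanest fix is Nagumo's theorem for the closed set $\bar\Cc$ under the locally Lipschitz field $\bbf+\bg\bk$: verify the subtangentiality condition using the sign structure above. I would state this via the Bony--Brezis / Nagumo tangent cone condition, and use regularity of the active constraints, guaranteed by the nonzero $L_\bg h_{i,r_i-1}$, to handle the corner points where multiple $h_{i,j}$ vanish simultaneously. Since the result is cited from~\cite[Thm. 4]{WX-CB:22}, I would ultimately defer these technical points to that reference.
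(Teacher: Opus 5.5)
\textbf{Verdict: genuine gap.} Your argument works for a single chain, but the statement as written is false for $p\geq 2$, so your corner-point step cannot be completed.

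\textbf{What the paper provides.} The paper gives no proof of its own. It cites \cite[Thm.~4]{WX-CB:22}, which concerns a single HOCBF, i.e., one chain $h_{i,0},\dots,h_{i,r_i-1}$.

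\textbf{The single-chain case is fine.} Here your Nagumo fix is the right way to break the circularity you identified.
\begin{itemize}
\item Since $L_\bg L_\bbf^{r_i-1}h_i\neq\mathbf{0}_m^\top$, the differentials of $h_i,L_\bbf h_i,\dots,L_\bbf^{r_i-1}h_i$ are linearly independent.
\item Each $\nabla h_{i,j}$ equals $\nabla L_\bbf^{j}h_i$ plus a combination of lower-order ones. Hence the tangent cone of $\bar{\Cc}_i$ equals its linearized cone.
\item The active derivatives are $L_\bbf h_{i,j}=h_{i,j+1}\geq 0$ for $j<r_i-1$, and at least $-\bar\alpha_{i,r_i}(0)=0$ at the top level. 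This is exactly subtangentiality.
\end{itemize}
With this, the first-exit case analysis becomes unnecessary. A minor point on Step 1: the min-norm controller is locally Lipschitz only if $\bar\alpha_{i,r_i}$ is, which extended class-$\Kc$ does not guarantee. Enforcing, e.g., $\dot h_{i,r_i-1}=-h_{i,r_i-1}$ avoids this.

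\textbf{Where the proof breaks.} You claim that nonvanishing $L_\bg h_{i,r_i-1}$ supplies the regularity needed at corners where several $h_{i,j}$ vanish. That is true only within one chain. For $p\geq 2$, active gradients from different chains can be linearly dependent. The linearized cone is then strictly larger than the tangent cone of $\bar{\Cc}$, so checking the active constraints on $\bar{\Cc}$ no longer implies invariance.

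\textbf{Counterexample.} The second claim, as literally stated, is false. Take
\begin{itemize}
\item dynamics $\dot x_1=1$, $\dot x_2=u$;
\item constraints $h_1(\bx)=x_2-x_1^2$ and $h_2(\bx)=-x_2$;
\item $\bar\alpha_{1,1}=\bar\alpha_{2,1}$ the identity.
\end{itemize}
Both constraints have relative degree one on $\real^2$, since $L_\bg h_1=1$ and $L_\bg h_2=-1$. Each $\bar{\Cc}_i$ is control invariant, and $\bar{\Cc}=\{\mathbf{0}_2\}$. Because $L_\bbf h_1=-2x_1$ and $L_\bbf h_2=0$, the controller $\bk\equiv 0$ satisfies both inequalities~\eqref{eq:hocbf-inequality} on $\bar{\Cc}$: both left-hand sides vanish at the origin. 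Yet the closed-loop solution $\bx(t)=(t,0)$ gives $h_1(\bx(t))=-t^2<0$ for all $t>0$.

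So no argument can close this step. Deferring to \cite{WX-CB:22} does not help either, since that result covers one chain only.

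\textbf{How to repair the statement.} A correct version needs an extra hypothesis. Either of the following works.
\begin{itemize}
\item Impose the inequalities on an open set containing $\bar{\Cc}$, or globally as the paper's Problem~\ref{prob:problem-stability} does. Then the top-level inequality holds on a right neighborhood of the exit time, and your backward sign argument goes through with no circularity. In the example above, no controller satisfies both inequalities near the origin, consistent with this.
\item Assume a constraint qualification across chains. This holds automatically in the paper's linear/affine setting: all $h_{i,j}$ are affine, $\bar{\Cc}$ is a polyhedron, and tangent and linearized cones coincide.
\end{itemize}
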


%
%
%
In general nonlinear settings, designing controllers that satisfy all HOCBF constraints~\eqref{eq:hocbf-inequality} can be challenging, especially when stability is also considered, see \cite{PM-YC-EDA-JC:26-jnls,MFR-APA-PT:21,LB-SZ-APS:26,NM-JC-PS-KZ:25,PM-SSM-ADA:26}.

\subsection{Linear systems with affine constraints}

Given the difficulty of designing safe and stabilizing controllers for general nonlinear systems, we focus throughout the rest of the paper on linear systems:
\begin{align}\label{eq:linear-system}
    \dot{\bx} = \bA \bx + \bB \bu,
\end{align}
with $\bA \in \real^{n\times n}$ and $\bB\in\real^{n\times m}$. We further assume that $h_i(\bx) = \bc_i^\top \bx + d_i$ is affine in the state $\bx$, with $\bc_i \in \real^n$ and $d_i \in \real$, and that $h_i$ has relative degree $r_i$.

%
%
In this setting, the stability of the equilibrium $\bx^\star=\bzero$ is well understood under linear state-feedback $\bu=-\bK\bx$, i.e., the origin is globally exponentially stable (GES) if and only if $\bA-\bB\bK$ is Hurwitz. Motivated by this linear feedback characterization, we choose the functions $\bar{\alpha}_{i,j}(y) = \alpha_{i,j} y$ to be linear with slopes $\alpha_{i,j} > 0$ so that the intermediate auxiliary functions~\eqref{eq:HOCBF} are affine in the state $\bx$. In particular, defining $\bPhi_{i,j}(\bA) = \prod_{l=1}^j (\bA+\alpha_{i,l} \bI_n)$, we have
$h_{i,j}(\bx) = \bc_i^\top \bPhi_{i,j}(\bA) \bx + d_i \prod_{l=1}^j \alpha_{i,l}$~\cite[Lemma 2]{PM-SSM-ADA:26}.
Furthermore, the HOCBF constraint~\eqref{eq:hocbf-inequality} reads as:
\begin{align}\label{eq:hocbf-linear}
    \bc_i^\top \bPhi_i(\bA) \bx + \bc_i^\top \bA^{r_i-1} \bB \bu + \alpha_i d_i \geq 0, \quad i\in[p].
\end{align}
where $\bPhi_i \triangleq \bPhi_{i,r_i}$ and $\alpha_i \triangleq \prod_{j=1}^{r_i} \alpha_{i,j}$ are introduced for compactness of the presentation.
%
%
%
With the HOCBF constraints, we seek to solve the following problem.

\begin{problem}\label{prob:problem-stability}
    Characterize necessary and sufficient conditions under which there exists a gain matrix $\bK\in\real^{m\times n}$ such that the linear feedback $\bu=-\bK\bx$ satisfies HOCBF constraints~\eqref{eq:hocbf-linear} 
    for all $\bx\in\real^n$
    while rendering the origin GES.
\end{problem}

Although Theorem~\ref{thm:hocbf} only requires the HOCBF constraints to be satisfied in $\bar{\Cc}$ to ensure its forward invariance, Problem~\ref{prob:problem-stability} imposes the constraints globally (for all $\bx\in\real^n$).
This stronger requirement is motivated by the fact that HOCBF constraints outside $\bar{\Cc}$ guarantee asymptotic stability of $\bar{\Cc}$ (cf.~\cite[Proposition 3]{XT-WSC-DVD:22}), as well as a robust notion of safety, referred to as input-to-state safety~\cite{SK-ADA:19}.
Additionally, the requirement of global satisfaction of the HOCBF conditions makes the controllers globally defined and significantly simplifies the results in the paper.

\subsection{Optimal and robust linear controllers}\label{sec:optimal-robust-linear-safe}

Provided that Problem~\ref{prob:problem-stability} admits a non-empty class of linear controllers, we next consider how to select among them according to performance and robustness criteria. A natural performance metric is the trajectory quadratic cost:
$$
J(\bx_0,\bk) = \int_0^\infty    \Big(
        \bx(t)^\top \bQ \bx(t)  +  \bk(\bx(t))^{\top}  \bR \bk(\bx(t))
        \Big) dt
$$
evaluated over the trajectory $t\mapsto \bx(t)$ of~\eqref{eq:linear-system} from the initial condition $\bx_0\in\real^n$ under feedback policy $\bu=\bk(\bx)$, with $\bQ \in \mathbb{S}_{++}^n$ and $\bR \in \mathbb{S}_{++}^m$. In the unconstrained LQR problem, there exists a single linear feedback policy that minimizes this cost for every initial condition. However, when constraining the controller to satisfy the HOCBF constraints~\eqref{eq:hocbf-linear}, an optimal safe feedback policy $\bk$ need not be linear, and even if it is constrained to be linear, it is unclear whether the optimal solution is independent of the initial condition $\bx_0$.
\begin{problem}\label{prob:optimality}
    Given $\{ \alpha_{i,j} \}_{i\in[p], j\in[r_i]}$, let ${\mathcal{M}\subseteq\real^{m\times n}}$ denote the set of gain matrices $\bK$ such that ${\bu=-\bK\bx}$ renders the origin GES and globally satisfies the HOCBF constraints~\eqref{eq:hocbf-linear}. Characterize a gain matrix ${\bK^\star\in\mathcal{M}}$ such that:
    $$
    J(\bx_0,\bx\mapsto-\bK^\star\bx) = \min_{\bK\in\mathcal M} J(\bx_0,\bx\mapsto-\bK\bx),~\forall \bx_0\in\real^n.
    $$
\end{problem}




For robustness, we consider matched disturbances $\bd\in\real^m$ in the dynamics: $\dot{\bx} = \bA\bx+\bB(\bu+\bd)$. With the feedback convention $\bu=-\bK\bx$, and assuming $\bA-\bB\bK$ is Hurwitz, the corresponding input sensitivity transfer matrix from control feedback, from $\bd$ to $\bu+\bd$, is:
\begin{equation}
\bS_{\bK}(s)
=
\bI-\bK(s\bI-(\bA-\bB\bK))^{-1}\bB .
\end{equation}

For SISO systems,  if $\norm{\bS_{\bK}}_{\infty} < \gamma$, 
the phase margin is lower bounded by $2 \arcsin(\frac{1}{2 \gamma})$ and the gain margin is lower bounded by $\frac{\gamma}{\gamma-1}$~\cite{SS-IP:05} (see~\cite{NL-NS-MA:81} for MIMO systems).

\begin{problem}\label{prob:problem-robustness}
    Let $\gamma > 1$. Characterize the existence of a gain matrix $\bK\in\mathcal{M}$ such that
    $\norm{\bS_{\bK}}_{\infty} < \gamma$.
\end{problem}

%
\section{Linear Controllers for HOCBF constraints}
In this section, we characterize the set of linear feedback controllers that globally satisfy the HOCBF constraints~\eqref{eq:hocbf-linear}. Here we focus only on safety. For each $i\in[p]$, we use the shorthand notation $\bell_i \triangleq \bB^\top (\bA^{r_i-1})^\top \bc_i$ for the constraint normals  in~\eqref{eq:hocbf-linear}. We begin our exposition with the case where the vectors $\{\bell_i\}_{i=1}^p$ are linearly independent, and the linearly dependent case is handled later.

\subsection{Linearly independent constraint normals}
When $\{\bell_i\}_{i=1}^p$ are linearly independent, the matrix $\bL\in\real^{m\times p}$ with columns $\{ \bell_i \}_{i=1}^p$ is full rank. Let $\bL_\perp\in\real^{m\times(m-p)}$ be a matrix whose columns are a basis for $\ker(\bL^\top)$ (cf.~\cite[Section 0.6.6]{RAH-CRJ:12}). Then we can decompose the control input $\bu = \bL \boldsymbol{\nu} + \bL_\perp\tilde{\bu}$, where $\boldsymbol{\nu}\in\real^p$, $\tilde{\bu}\in\real^{m-p}$. Under this decomposition, the HOCBF constraints~\eqref{eq:hocbf-linear} depend only on the $\boldsymbol{\nu}$ component as:
$$
\mu_i(\bx,\boldsymbol{\nu}) \triangleq \bc_i^\top \bPhi_i(\bA) \bx + \bell_i^\top \bL \boldsymbol{\nu} + \alpha_i d_i \geq 0.
$$
This reformulation leads to the following characterization.

\begin{proposition}\longthmtitle{Linear controllers satisfying linearly independent HOCBF constraints}\label{prop:linear-controllers-satisfying-linearly-independent-hocbf}
    Consider~\eqref{eq:linear-system} with a safe set $\Cc$ in~\eqref{eq:safety_constraint} defined by $\{h_i\}_{i=1}^p$. 
    Suppose that $\{\bell_i\}_{i=1}^p$ are linearly independent. Then there exists $\bK\in\real^{m\times n}$ such that $\bu=-\bK\bx$ satisfies all $p$ HOCBF constraints~\eqref{eq:hocbf-linear} for all $\bx\in\real^n$ if and only if $d_i\geq 0$ for all $i\in [p]$. In particular, the set of all such gain matrices is given by:
     $\setdef{\bK\in\real^{m\times n}}{ \bK = \bK_0 + \bL_\perp\tilde{\bK}, \tilde{\bK} \in \real^{(m-p)\times n} }$ where $\bK_0 = \bL(\bL^\top \bL)^{-1} \tilde{\bC}$ and $\tilde{\bC} = [\bc_1^\top \bPhi_1(\bA); \hdots; \bc_p^\top \bPhi_p(\bA)]$.
\end{proposition}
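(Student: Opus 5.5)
The plan is to substitute the linear feedback directly into the HOCBF constraints~\eqref{eq:hocbf-linear}. This turns each constraint into an affine function of $\bx$ that must be nonnegative on all of $\real^n$. The whole characterization then follows from one elementary fact: an affine function $\bx\mapsto \ba^\top\bx + b$ is nonnegative for every $\bx\in\real^n$ if and only if $\ba=\bzero_n$ and $b\geq 0$. Sufficiency is immediate. For necessity, if $\ba\neq\bzero_n$, evaluating along $\bx=-t\ba$ gives $-t\norm{\ba}^2+b<0$ for $t$ large; with $\ba=\bzero_n$ we then need $b\geq0$.

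First, with $\bu=-\bK\bx$ and using $\bc_i^\top\bA^{r_i-1}\bB=\bell_i^\top$, constraint $i$ in~\eqref{eq:hocbf-linear} reads
\begin{align*}
\big(\bc_i^\top\bPhi_i(\bA)-\bell_i^\top\bK\big)\bx+\alpha_i d_i\geq 0,\quad \forall\bx\in\real^n.
\end{align*}
By the fact above, this holds if and only if $\bell_i^\top\bK=\bc_i^\top\bPhi_i(\bA)$ and $\alpha_i d_i\geq0$. Since $\alpha_i=\prod_j\alpha_{i,j}>0$, the second condition is equivalent to $d_i\geq0$. Stacking over $i\in[p]$, the set of admissible gains is therefore nonempty if and only if $d_i\geq0$ for all $i$ and the matrix equation $\bL^\top\bK=\tilde{\bC}$ is solvable. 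In that case the admissible set is exactly the solution set of $\bL^\top\bK=\tilde{\bC}$.

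Second, I would solve this linear equation. Linear independence of $\{\bell_i\}_{i=1}^p$ means $\bL$ has full column rank $p$ (so $p\leq m$). Hence $\bL^\top\bL\in\real^{p\times p}$ is invertible and $\bL^\top$ is surjective onto $\real^p$, so the equation is always solvable. This gives the ``if and only if $d_i\geq0$'' claim. A particular solution is $\bK_0=\bL(\bL^\top\bL)^{-1}\tilde{\bC}$, since $\bL^\top\bK_0=\tilde{\bC}$. Any other solution $\bK$ satisfies $\bL^\top(\bK-\bK_0)=\bzero$. This means every column of $\bK-\bK_0$ lies in $\ker(\bL^\top)$, so it is a combination of the columns of $\bL_\perp$, i.e.\ $\bK-\bK_0=\bL_\perp\tilde{\bK}$ for some $\tilde{\bK}\in\real^{(m-p)\times n}$. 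Conversely, every such $\bK$ solves the equation because $\bL^\top\bL_\perp=\bzero$. When $p=m$, $\bL_\perp$ is empty and $\bK_0$ is the unique admissible gain.

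There is no real obstacle here. The only points needing care are the sign convention from $\bu=-\bK\bx$, which produces $\bL^\top\bK=\tilde{\bC}$ rather than its negative, and stating the necessity direction of the affine-nonnegativity fact cleanly, since that direction is what yields $d_i\geq 0$.
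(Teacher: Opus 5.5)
Your proposal is correct and follows essentially the same route as the paper: substitute $\bu=-\bK\bx$, use a ray $\bx=-t\bar{\bv}$ to force the coefficient vector to vanish and deduce $d_i\geq 0$, then solve $\bL^\top\bK=\tilde{\bC}$. The differences are cosmetic. You write the solution set as the particular solution $\bK_0$ plus $\ker(\bL^\top)$, whereas the paper expands $\bK$ in the invertible basis $[\bL\ \bL_\perp]$, and you make the sufficiency direction explicit where the paper leaves it implicit.
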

\begin{proof}
    Under feedback ${\bu = -\bK\bx}$, we have 
    $\mu_i = (\bc_i^\top \bPhi_i(\bA) - \bell_i^\top \bK)\bx + \alpha_i d_i$, which necessarily requires that 
    $\bar{\bv}^\top = \bc_i^\top \bPhi_i(\bA) - \bell_i^\top \bK = \mathbf{0}_n^\top$. Indeed, if this was not the case, we would have a point of the form $\bx = -t \bar{\bv}$, with $t > 0$ sufficiently large ($t > \frac{\alpha_i d_i}{\norm{\bar{\bv}}^2}$), for which $\mu_i < 0$, which would violate the $i$-th HOCBF constraint. As such, $d_i\geq 0$ is required for HOCBF constraints to hold.
    In addition, $\bK$ must be the solution to
    $\tilde{\bC} = \bL^\top \bK$.
    Now, because $\begin{bmatrix}\bL & \bL_\perp\end{bmatrix}$ is invertible, we may uniquely express any valid solution of $\tilde{\bC} = \bL^\top \bK$ as  $\bK = \bL \bK_p+\bL_\perp\tilde\bK$, with $\bK_p\in\real^{p\times n}$ and $\tilde\bK\in\real^{(m-p)\times n}$. Then, $\tilde{\bC} = \bL^\top \bK$ simplifies to $\tilde{\bC} = \bL^\top\bL \bK_p$, which implies that $\bK = \bK_0+\bL_\perp \tilde \bK$    for some $\tilde{\bK} \in \real^{(m-p)\times n}$, from where the result follows.
\end{proof}
Proposition~\ref{prop:linear-controllers-satisfying-linearly-independent-hocbf} characterizes the set of linear controllers that satisfy the HOCBF constraints~\eqref{eq:hocbf-linear} under the assumption that $\{ \boldsymbol{\ell}_i \}_{i=1}^p$ are linearly independent. Such controllers exists under the necessary and sufficient condition: $d_i\geq 0$ for all $i\in[p]$, which implies the origin is in the safe set~\eqref{eq:safety_constraint}.

\begin{remark}\longthmtitle{Origin in the safe set}
    {\rm 
    Since $h_i(\bx)= \bc_i^\top \bx+d_i$, the condition $d_i\geq 0$ is equivalent to $\bzero\in \Cc_i$. Moreover, because $\alpha_{i,j}> 0$ is positive for all $j\in[r_i]$, the condition is also equivalent to the origin being in the HOCBF safe set, i.e., $\mathbf{0}_n\in\bar{\Cc}_i$. Indeed, since we seek to stabilize the system to the origin while rendering $\bar{\Cc}$ forward invariant, the condition is necessary for the safe stabilization problem, and it often holds in practice. 
    }\demo
\end{remark}


\subsection{Linearly dependent constraint normals}
Here we consider $\bar p$ HOCBF constraints, where in general, the normals $\{ \bell_i \}_{i=1}^{\bar p}$ could be linearly dependent. Nevertheless, we assume without loss of generality that the first $p \leq \bar p$ vectors $\{ \bell_i \}_{i=1}^{p}$ are linearly independent, and $\{ \bell_i \}_{i=p+1}^{\bar p}$ are linear combinations of $\{ \bell_i \}_{i=1}^{p}$. 
The following result extends Proposition~\ref{prop:linear-controllers-satisfying-linearly-independent-hocbf} to this setting.

\begin{proposition}\longthmtitle{Linear controllers satisfying HOCBF constraints}\label{prop:linear-controllers-satisfying-HOCBF-general}
     Consider~\eqref{eq:linear-system} with safe set $\Cc$ in~\eqref{eq:safety_constraint} defined by $\{h_i\}_{i=1}^{\bar p}$. 
     There exists $\bK\in\real^{m\times n}$ such that $\bu=-\bK\bx$ satisfies all $\bar p$ HOCBF constraints~\eqref{eq:hocbf-linear} for all $\bx\in\real^n$ if and only if $d_i\geq 0$ for all $i\in [\bar p]$ and the compatibility requirement $\bc_i^\top\Phi_i(\bA)-\bell_i^\top \bK_0 = \bzero$ holds for all $i\in[\bar p]\setminus [p]$. In particular, the set of all such matrices is:
     $\setdef{\bK\in\real^{m\times n}}{ \bK = \bK_0 + \bL_\perp\tilde{\bK}, \tilde{\bK} \in \real^{(m- p)\times n} }$.
\end{proposition}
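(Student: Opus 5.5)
The plan is to reuse the argument from Proposition~\ref{prop:linear-controllers-satisfying-linearly-independent-hocbf} for each of the $\bar p$ constraints. First, for every $i\in[\bar p]$, under $\bu=-\bK\bx$ the $i$-th HOCBF constraint~\eqref{eq:hocbf-linear} reads $(\bc_i^\top\bPhi_i(\bA)-\bell_i^\top\bK)\bx+\alpha_i d_i\geq 0$ for all $\bx\in\real^n$. An affine function is nonnegative on all of $\real^n$ if and only if its linear part vanishes and its constant term is nonnegative. For necessity, if the row vector $\bar\bv^\top=\bc_i^\top\bPhi_i(\bA)-\bell_i^\top\bK$ were nonzero, we would evaluate at $\bx=-t\bar\bv$ with $t$ large to obtain a violation. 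Since $\alpha_i>0$, the constraint is therefore equivalent to the two conditions $d_i\geq0$ and $\bell_i^\top\bK=\bc_i^\top\bPhi_i(\bA)$.

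Second, apply Proposition~\ref{prop:linear-controllers-satisfying-linearly-independent-hocbf} to the first $p$ constraints, whose normals are linearly independent. Together with $d_i\geq0$ for $i\in[p]$, it shows that the equalities for $i\in[p]$ hold exactly when $\bK=\bK_0+\bL_\perp\tilde\bK$ for some $\tilde\bK\in\real^{(m-p)\times n}$. Here $\bL$, $\bL_\perp$ and $\bK_0$ are built from $\{\bell_i\}_{i=1}^p$ only.

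Third, handle the remaining indices $i\in[\bar p]\setminus[p]$. By assumption $\bell_i=\bL\boldsymbol{\lambda}_i$ for some $\boldsymbol{\lambda}_i\in\real^p$, so $\bell_i\in\operatorname{range}(\bL)$. Since the columns of $\bL_\perp$ span $\ker(\bL^\top)$, we get $\bell_i^\top\bL_\perp=\bzero$. Hence for every $\bK=\bK_0+\bL_\perp\tilde\bK$ in the family,
$$\bell_i^\top\bK=\bell_i^\top\bK_0,$$
independently of $\tilde\bK$. The equality condition for such $i$ then becomes $\bc_i^\top\bPhi_i(\bA)-\bell_i^\top\bK_0=\bzero$. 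This is a condition on the data only: it either holds for every member of the family or for none.

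Combining the three steps, a valid $\bK$ exists if and only if $d_i\geq0$ for all $i\in[\bar p]$ and the compatibility requirement holds. When it exists, the set of valid gains is exactly $\bK_0+\bL_\perp\tilde\bK$, which gives both directions of the statement at once. The step that needs care, though it is short, is the third. There we must check that the dependent constraints impose no further restriction on $\tilde\bK$, so that the set of gains is unchanged. This rests entirely on $\bell_i^\top\bL_\perp=\bzero$ for dependent normals.
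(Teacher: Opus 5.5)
Your proposal is correct and follows essentially the same route as the paper: the $\bx=-t\bar{\bv}$ argument reduces each constraint to $d_i\geq 0$ and $\bell_i^\top\bK=\bc_i^\top\bPhi_i(\bA)$, Proposition~\ref{prop:linear-controllers-satisfying-linearly-independent-hocbf} handles the first $p$ constraints, and $\bell_i^\top\bL_\perp=\bzero$ turns each dependent constraint into the compatibility requirement. You are slightly more explicit than the paper in two places: you note that $d_i\geq 0$ is also needed for the dependent indices, and that the stated family of gains is the answer only when the conditions hold.
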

\begin{proof}
    By the same proof as in Proposition~\ref{prop:linear-controllers-satisfying-linearly-independent-hocbf}, 
    since $\{ \bell_i \}_{i=1}^{p}$ are linearly independent,
    the set of linear controllers satisfying the HOCBF conditions for $i\in[p]$
    is exactly as stated if and only if $d_i\geq 0$ for all $i\in[p]$.
    Now let $i\in[\bar p]\setminus[p]$. The same argument as in the proof of Proposition~\ref{prop:linear-controllers-satisfying-linearly-independent-hocbf} requires that $\bc_i^\top\Phi_i(\bA)-\bell_i^\top \bK = \bzero$. Then since $\bK=\bK_0 +\bL_\perp\tilde{\bK}$ is necessary and $\bell_i^\top \bL_\perp = \bzero$ from linear dependency, we have the compatibility requirement as stated. 
\end{proof}
Proposition~\ref{prop:linear-controllers-satisfying-HOCBF-general}
provides necessary and sufficient conditions for the existence of a linear feedback controller satisfying all HOCBF constraints globally. These involve the origin being in the safe set and a compatibility condition for linearly dependent constraints. Moreover, it completely characterizes the set of such controllers. This concludes our safety portion of the analysis.
\begin{remark}\longthmtitle{Box constraints}\label{rem:box-constraints}
    {\rm
    A common source of linearly dependent HOCBF constraints arises from box constraints on state variables. Consider a pair of constraints with indices $i_1, i_2 \in [\bar p]$
    such that $\bc_{i_2} = -\bc_{i_1}$ and $d_{i_1} = -d_{i,\min}$, $d_{i_2} = d_{i,\max}$.
    In this case, Proposition~\ref{prop:linear-controllers-satisfying-HOCBF-general} requires that $d_{i,\min} \leq 0 \leq d_{i,\max}$, i.e., the origin is in the box constraint. Moreover, the  requirement $\bc_{i_2}^\top \bPhi_{i_2}(\bA) - \bell_{i_2}^\top\bK_0 = \mathbf{0}_n^\top$  is automatically satisfied if $\alpha_{i_1,j} = \alpha_{i_2,j}$ for each $j\in[r_{i_1}]$.
    \demo
    }
\end{remark}

\section{HOCBF-Based Stabilizing Controllers}\label{sec:stabilizing-controllers-satisfying-HOCBF}
In this section we characterize the set of controllers that not only satisfy the HOCBF conditions~\eqref{eq:hocbf-linear} but also stabilize the origin for system~\eqref{eq:linear-system}. In order to cleanly derive the conditions, we follow an approach similar to that of~\cite{JJC-CJT-SS-KS:25} and write the dynamics~\eqref{eq:linear-system} in the \textit{CBF output form} (cf.~\cite[Definition 6, 7]{JJC-CJT-SS-KS:25}), which facilitates our ensuing analysis.

\subsection{Linear Dynamics in CBF Output Form}\label{sec:linear-dynamics-CBF-output-form}
Consider $\bar p$ HOCBF constraints~\eqref{eq:hocbf-linear} where the first $p$ constraints are linearly independent while the rest are linear combinations of them. The following is a key technical result for our coordinate transformation into CBF output form.

\begin{lemma}\label{lem:linear-independence-output-derivatives}
    Suppose $\{\bell_i\}_{i=1}^p$ are linearly independent. Then, the vectors $\mathcal{W} := \{ \bc_i, \bPhi_{i,1}(\bA)^\top \bc_i, \hdots, \bPhi_{i,r_i-1}(\bA)^\top \bc_i \}_{i=1}^p$
    are linearly independent.
\end{lemma}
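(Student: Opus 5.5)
This is the standard relative-degree argument from input–output linearization (cf.~\cite[Chapter 5]{AI:95}), adapted to the shifted polynomials $\bPhi_{i,j}(\bA)$. The plan is to replace $\mathcal{W}$ by a basis with the same span that is easier to handle, and then to test a vanishing linear combination against suitable powers of $\bA$ applied to $\bB$.

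\textbf{Step 1 (change of basis).} For each $i$ and $j\le r_i-1$, the matrix $\bPhi_{i,j}(\bA)^\top=\prod_{l=1}^j(\bA^\top+\alpha_{i,l}\bI_n)$ is a monic polynomial of degree $j$ in $\bA^\top$. Hence, for fixed $i$, the family $\{\bPhi_{i,j}(\bA)^\top\bc_i\}_{j=0}^{r_i-1}$ is related to $\{(\bA^\top)^j\bc_i\}_{j=0}^{r_i-1}$ by a unit lower-triangular change of basis. Doing this block by block, linear independence of $\mathcal{W}$ is therefore equivalent to linear independence of
$$\mathcal{W}' := \{ (\bA^\top)^j \bc_i : i\in[p],\ 0\le j\le r_i-1\}.$$

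\textbf{Step 2 (relative-degree identities).} Since $h_i$ has relative degree $r_i$, we have $\bc_i^\top\bA^k\bB=\bzero^\top$ for $k<r_i-1$ and $\bc_i^\top\bA^{r_i-1}\bB=\bell_i^\top$. Combining these gives
$$\bc_i^\top \bA^{j}\bA^{k}\bB = \bzero^\top \quad\text{whenever } j+k< r_i-1, \qquad \bc_i^\top\bA^{j}\bA^{r_i-1-j}\bB=\bell_i^\top .$$

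\textbf{Step 3 (induction on a vanishing combination).} Suppose $\sum_{i}\sum_{j=0}^{r_i-1}\beta_{i,j}\bc_i^\top\bA^j=\bzero^\top$. I show all $\beta_{i,j}=0$ by induction on the ``level'' $s=r_i-1-j$, treating the smallest values of $s$ first. This is the only delicate step, because the $r_i$ differ across $i$ and the bookkeeping must be done carefully.

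\emph{Base case $s=0$.} Right-multiply the relation by $\bA^{0}\bB=\bB$. A term $(i,j)$ contributes $\beta_{i,j}\bc_i^\top\bA^{j}\bB$, which by Step 2 vanishes unless $j\ge r_i-1$, i.e.\ unless $j=r_i-1$. The surviving terms give $\sum_i\beta_{i,r_i-1}\bell_i^\top=\bzero^\top$. Linear independence of the $\bell_i$ then forces $\beta_{i,r_i-1}=0$ for all $i$.

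\emph{Inductive step.} Assume every coefficient with level $<s$ (that is, $j>r_i-1-s$) has already been shown to be zero. Multiply the relation on the right by $\bA^{s}\bB$. A term $(i,j)$ still present has $j\le r_i-1-s$, so $j+s\le r_i-1$. If $j+s<r_i-1$ the term vanishes by Step 2; if $j+s=r_i-1$ it equals $\beta_{i,j}\bell_i^\top$. This yields $\sum_{i:\,r_i-1\ge s}\beta_{i,r_i-1-s}\bell_i^\top=\bzero^\top$, and independence of the $\bell_i$ gives $\beta_{i,r_i-1-s}=0$.

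Running $s$ from $0$ up to $\max_i r_i-1$ shows that every $\beta_{i,j}$ is zero, so $\mathcal{W}'$, and hence $\mathcal{W}$, is linearly independent.
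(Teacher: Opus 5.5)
Your proof is correct and follows the paper's argument. The paper likewise moves between $\mathcal{W}$ and $\{(\bA^\top)^j\bc_i\}$ by an invertible (triangular) transformation, and gets independence of the latter from Isidori's Lemma 5.1.1 after noting that the decoupling matrix for the reduced input $\bB\bL\boldsymbol{\nu}$ is the nonsingular matrix $\bL^\top\bL$. Your Step 3 proves that lemma inline for the linear case, working with $\bB$ and the independence of the $\bell_i$ directly, which avoids the detour through a square system with vector relative degree.
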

\begin{proof}
    Since $\bL$ is full column rank, the matrix defined in~\cite[Equation 5.2]{AI:95}) is precisely $\bL^\top\bL$, which is nonsingular. Therefore, $\{ h_i \}_{i=1}^p$ has vector relative degree $(r_1, \hdots, r_p)$ for system $\dot{\bx} = \bA\bx + \bB\bL\boldsymbol{\nu}$. Then, by~\cite[Lemma 5.1.1]{AI:95}, the vectors $\bar{\mathcal{W}} := \{ \bc_i, \bA^\top \bc_i, \hdots, (\bA^\top)^{r_i-1} \bc_i \}_{i=1}^p$ are linearly independent. Now, with an invertible transformation from $\bar{\mathcal{W}}$ to $\mathcal{W}$ (cf.~\cite[Equation 15]{JJC-CJT-SS-KS:25}), the result follows.
\end{proof}

By Lemma~\ref{lem:linear-independence-output-derivatives}, the HOCBF auxiliary functions $h_{i,j}$ are linearly independent, so there exists an affine coordinate transformation from~\eqref{eq:linear-system} to the the coordinates defined by $h_{i,j}$ (along with some internal variables complementing them).
Since our goal is to study the stability of the origin, we will instead only use a linear (instead of affine) transformation. This results in shifted CBF output coordinates that preserve the origin as the desired equilibrium point.

Let $\bT_{\boldsymbol{\psi}}\in\real^{n \times r}$ be a full column rank matrix whose columns are the vectors in $\mathcal{W}$ (as defined in Lemma~\ref{lem:linear-independence-output-derivatives}) with $r:= \sum_{i=1}^p r_i \leq n$.
On the other hand, let $\bT_{\boldsymbol{\eta}} \in \real^{n\times(n-r)}$ be
a full column rank matrix such that $\bT_{\boldsymbol{\psi}}^\top \bT_{\boldsymbol{\eta}} = \mathbf{0}_{r\times(n-r)}$
(i.e., $\bT_{\boldsymbol{\eta}}$ is selected so that its columns span $\text{ker}(\bT_{\boldsymbol{\psi}}^\top)$).
This completes the linear transformation $\bx \mapsto (\bT_{\boldsymbol{\psi}}^\top \bx; \bT_{\boldsymbol{\eta}}^\top \bx) = (\boldsymbol{\psi}, \boldsymbol{\eta})$.
%
Here, $\bpsi\in\real^r$ is an aggregate of $\bpsi_i\in\real^{r_i}$ such that $\psi_{i,j} = \bc_i^\top \bPhi_{i,j}(\bA)\bx$ for $j\in\{0\}\cup[r_i-1]$. Then for $i\in[p]$ and $j\in[0]\cup[r_i-2]$, we have the dynamics:
\begin{align*}
    &\dot{\psi}_{i,j} \! = \! \dot{h}_{i,j} \!=\! h_{i,j+1}(\bx) \! - \! \alpha_{i,j+1}h_{i,j}(\bx)
    \! = \! \psi_{i,j+1} \! - \! \alpha_{i,j+1} \psi_{i,j}, \\
    &\dot{\psi}_{i,r_i-1}(\bx) = \dot{h}_{i,r_i-1}(\bx) = 
    \mu_i - \alpha_i d_i - \alpha_{i,r_i} \psi_{i,r_i-1},
\end{align*}
for which we can write compactly as:
\begin{align}\label{eq:psi-i-dynamics}
    \dot{\boldsymbol{\psi}}_i = \boldsymbol{\Psi}_i \boldsymbol{\psi}_i + \bB_{\bpsi_i}\bar{\boldsymbol{\mu}},
\end{align}
where $\bPsi_{i}\in\real^{r_i \times r_i}$ is 
such that $(\bPsi_{i})_{jj} = -\alpha_{i,j}$ for $j\in[r_i]$, $(\bPsi_i)_{j,j+1}=1$ for $j\in[r_i-1]$, and $(\bPsi_{i})_{jk} = 0$ for other $j,k\in[r_i]$.
The matrix $\bB_{\bpsi_i}$ has $1$ in its $(r_i, i)$-th entry and $0$ for all other entries.
We have also defined $\bar{\boldsymbol{\mu}} \triangleq [\mu_1 - \alpha_1 d_1; \hdots; \mu_p - \alpha_p d_p] \in \real^p$. 
Note that $\boldsymbol{\nu}$ can be expressed in terms of $\bar{\boldsymbol{\mu}}$ as $\boldsymbol{\nu} = (\bL^\top \bL)^{-1}( \bar{\boldsymbol{\mu}} - \tilde{\bC}\bx )$, with $\tilde{\bC} = [\bc_1^\top \bPhi_1(\bA); \hdots; \bc_p^\top \bPhi_p(\bA)]$.

Next, we construct the dynamics for the $\boldsymbol{\eta}$ coordinates. Noting that $\bT_{\boldsymbol{\psi}}$ and $\bT_{\boldsymbol{\eta}}$ are orthogonal and together form a basis, the inverse transformation is $\bx = \bT_{\boldsymbol{\psi}} (\bT_{\boldsymbol{\psi}}^\top \bT_{\boldsymbol{\psi}})^{-1} \bpsi + \bT_{\boldsymbol{\eta}} (\bT_{\boldsymbol{\eta}}^\top \bT_{\boldsymbol{\eta}})^{-1} \boldsymbol{\eta}$.
Together with $\dot{\boldsymbol{\eta}} = \bT_{\boldsymbol{\eta}}^\top \dot \bx$,
we derive:
\begin{subequations}
\begin{align}
    \dot{\boldsymbol{\psi}} &= \bPsi \bpsi + \bB_{\bpsi}\bar{\boldsymbol{\mu}},~\label{eq:psi-dynamics} \\
    \dot{\boldsymbol{\eta}} &= \boldsymbol{\Gamma}_{\bpsi} \bpsi 
    + \boldsymbol{\Gamma}_{\boldsymbol{\eta}} \boldsymbol{\eta}
    + \boldsymbol{\Gamma}_{\mu} \bar{\boldsymbol{\mu}}
    + \bT_{\boldsymbol{\eta}}^\top \bB \bL_\perp \tilde{\bu},~\label{eq:eta-dynamics}
\end{align}
\label{eq:dynamics-normal-form}
\end{subequations}
where: $\bPsi = \text{blkdiag}(\{ \bPsi_i \}_{i=1}^p), \quad \bB_{\psi} = [\bB_{\bpsi_1}; \hdots; \bB_{\bpsi_p}],$
\begin{align*}
    &\boldsymbol{\Gamma}_{\bpsi} = \hat{\bA} \bT_{\boldsymbol{\psi}} (\bT_{\boldsymbol{\psi}}^\top \bT_{\boldsymbol{\psi}})^{-1},  \boldsymbol{\Gamma}_{\boldsymbol{\eta}} = \hat{\bA} \bT_{\boldsymbol{\eta}} (\bT_{\boldsymbol{\eta}}^\top \bT_{\boldsymbol{\eta}})^{-1}, \\ &\boldsymbol{\Gamma}_{\boldsymbol{\mu}} = \bT_{\boldsymbol{\eta}}^\top \bB \bL ( \bL^\top \bL )^{-1}, 
    \hat{\bA} = \bT_{\boldsymbol{\eta}}^\top \bA - \bT_{\boldsymbol{\eta}}^\top \bB \bL ( \bL^\top \bL )^{-1} \tilde{\bC},  
\end{align*}

Note that since the maps $\bx \to (\bpsi,\boldsymbol{\eta})$ and $\bu \to (\boldsymbol{\nu},\tilde{\bu})$ are invertible, the stabilizability properties of~\eqref{eq:dynamics-normal-form} are equivalent to those of~\eqref{eq:linear-system}. This observation will be leveraged next.

\subsection{Stabilizability with HOCBF constraints}

Since Proposition~\ref{prop:linear-controllers-satisfying-HOCBF-general} characterizes the set of all gain matrices for which the HOCBF constraints~\eqref{eq:hocbf-linear}  hold globally, safe stabilization can be achieved by selecting a gain matrix $\bK$ from this set such that $\bA-\bB\bK$ is also Hurwitz. Here, we leverage the shifted CBF output form~\eqref{eq:dynamics-normal-form} to solve Problem~\ref{prob:problem-stability}.


\begin{theorem}\longthmtitle{Conditions for safe stabilizability with linear controllers}\label{thm:conditions-stabilizability}
     Consider~\eqref{eq:linear-system} with safe set in~\eqref{eq:safety_constraint} defined by $\{h_i\}_{i=1}^{\bar p}$. 
     Assume that $\{ \bell_i \}_{i=1}^{p}$ are linearly independent and $\{ \bell_i \}_{i=p+1}^{\bar p}$ are linear combinations of $\{ \bell_i \}_{i=1}^{p}$.
     There exists $\bK\in\real^{m\times n}$ 
     with $\bu=-\bK\bx$ satisfying all $\bar p$ HOCBF constraints~\eqref{eq:hocbf-linear} for all $\bx\in\real^n$ and renders $\mathbf{0}_n$ GES if and only if: 
     \begin{enumerate}
        \item\label{it:di-condition} $d_i\geq 0$ for all $i\in[\bar p]$;
        \item\label{it:compatibility-condition} $\bc_i^\top\Phi_i(\bA)-\bell_i^\top \bK_0 = \bzero_n^\top$ holds for all $i\in[\bar p]\setminus [p]$;
        \item $(\boldsymbol{\Gamma}_{\boldsymbol{\eta}}, \bT_{\boldsymbol{\eta}}^\top \bB \bL_\perp)$ is stabilizable (or equivalently, $(\bA-\bB\bK_0,\bB\bL_\perp)$ is stabilizable).
     \end{enumerate}
    In particular, the set of all such matrices is
     $
     \setdef{\bK\in\real^{m\times n}}{ \bK = \bK_0 + \bL_\perp\tilde{\bK},~\tilde{\bK} \in \real^{(m- p)\times n},~\bA-\bB\bK \textup{ Hurwitz}}.
     $
\end{theorem}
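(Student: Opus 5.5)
The plan is to reduce the statement to Proposition~\ref{prop:linear-controllers-satisfying-HOCBF-general} and then study stabilizability within the resulting affine family of gains. By that proposition, a gain $\bK$ satisfies all $\bar p$ HOCBF constraints~\eqref{eq:hocbf-linear} globally if and only if conditions~\ref{it:di-condition} and~\ref{it:compatibility-condition} hold and $\bK = \bK_0 + \bL_\perp\tilde{\bK}$ for some $\tilde{\bK}\in\real^{(m-p)\times n}$. Since GES of the origin under $\bu=-\bK\bx$ is equivalent to $\bA-\bB\bK$ being Hurwitz, the set of safe stabilizing gains is exactly the set displayed in the statement. It therefore remains to show that this set is nonempty if and only if condition~3 holds.

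For the second form of condition~3, observe that $\bA-\bB\bK = (\bA-\bB\bK_0) - \bB\bL_\perp\tilde{\bK}$. Thus the problem is the existence of a state-feedback gain $\tilde{\bK}$ making $(\bA-\bB\bK_0) - (\bB\bL_\perp)\tilde{\bK}$ Hurwitz. By the standard characterization of stabilizability, such a $\tilde{\bK}$ exists exactly when $(\bA-\bB\bK_0,\bB\bL_\perp)$ is stabilizable. This already proves the theorem with the second version of condition~3.

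The remaining, and main, step is to show that this condition is equivalent to stabilizability of $(\boldsymbol{\Gamma}_{\boldsymbol{\eta}}, \bT_{\boldsymbol{\eta}}^\top\bB\bL_\perp)$. I will work in the coordinates $(\bpsi,\boldsymbol{\eta})$ of~\eqref{eq:dynamics-normal-form}. Under $\bK = \bK_0 + \bL_\perp\tilde{\bK}$ we have $\bL^\top\bK = \tilde{\bC}$, so $\bar{\boldsymbol{\mu}} = \tilde{\bC}\bx - \bL^\top\bK\bx = \bzero$. Hence the closed loop $\dot{\bx} = (\bA-\bB\bK_0)\bx + \bB\bL_\perp\tilde{\bu}$ becomes, in the new coordinates,
\begin{align*}
\dot{\bpsi} = \bPsi\bpsi, \qquad \dot{\boldsymbol{\eta}} = \boldsymbol{\Gamma}_{\bpsi}\bpsi + \boldsymbol{\Gamma}_{\boldsymbol{\eta}}\boldsymbol{\eta} + \bT_{\boldsymbol{\eta}}^\top\bB\bL_\perp\tilde{\bu}.
\end{align*}
The $\bpsi$-block is unaffected by the input because $\bT_{\bpsi}^\top\bB\bL_\perp = \bzero$. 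To see this, each column of $\bT_{\bpsi}$ has the form $\bPhi_{i,j}(\bA)^\top\bc_i$, and $\bc_i^\top\bPhi_{i,j}(\bA)\bB = \bzero$ for $j<r_i-1$ by the relative degree assumption. For $j=r_i-1$, the product $\bc_i^\top\bPhi_{i,r_i-1}(\bA)\bB$ equals $\bell_i^\top$, which annihilates $\bL_\perp$.

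So, up to a similarity transformation, the pair $(\bA-\bB\bK_0,\bB\bL_\perp)$ is block lower-triangular. Its uncontrolled diagonal block is $\bPsi$, which is Hurwitz since it is upper triangular with diagonal entries $-\alpha_{i,j}<0$. Its controlled block is $(\boldsymbol{\Gamma}_{\boldsymbol{\eta}}, \bT_{\boldsymbol{\eta}}^\top\bB\bL_\perp)$.

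I will then apply the PBH test to conclude. For any $\lambda$ with $\mathrm{Re}\,\lambda\ge 0$, the block $\bPsi-\lambda\bI$ is invertible. Using this, the rank condition $\operatorname{rank}[\,\bM-\lambda\bI,\ \bN\,]=n$ for the full triangular pair reduces to the rank condition for the $\boldsymbol{\eta}$-pair, which gives the equivalence. Alternatively, the equivalence follows from the standard fact that cascading a Hurwitz autonomous subsystem preserves stabilizability. The delicate part is verifying the coordinate computation, namely that $\boldsymbol{\Gamma}_{\boldsymbol{\eta}}$ is indeed the $\boldsymbol{\eta}$-block of $\bA-\bB\bK_0$ in these coordinates. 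I would check this directly from $\hat{\bA} = \bT_{\boldsymbol{\eta}}^\top(\bA-\bB\bK_0)$ and the inverse map $\bx = \bT_{\bpsi}(\bT_{\bpsi}^\top\bT_{\bpsi})^{-1}\bpsi + \bT_{\boldsymbol{\eta}}(\bT_{\boldsymbol{\eta}}^\top\bT_{\boldsymbol{\eta}})^{-1}\boldsymbol{\eta}$.
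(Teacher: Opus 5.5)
Your proposal is correct and follows essentially the same route as the paper: use Proposition~\ref{prop:linear-controllers-satisfying-HOCBF-general} to reduce to the affine family $\bK_0+\bL_\perp\tilde{\bK}$, then use the block lower-triangular structure in $(\bpsi,\boldsymbol{\eta})$ coordinates, whose uncontrolled diagonal block $\bPsi$ is Hurwitz. The only difference is the order of the two steps. You obtain the $(\bA-\bB\bK_0,\bB\bL_\perp)$ condition first and transfer it to the $\boldsymbol{\eta}$-pair via a PBH rank argument, explicitly checking $\bT_{\bpsi}^\top\bB\bL_\perp=\bzero$; the paper instead argues on the closed-loop matrix in $\boldsymbol{\eta}$ coordinates and then asserts the second equivalence by invertibility of the coordinate changes.
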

\begin{proof}
    When $\bu = -\bK\bx$, as shown in the proof of Proposition~\ref{prop:linear-controllers-satisfying-HOCBF-general} we have
    $\bar{\bv}^\top = \bc_i^\top \bPhi_i(\bA) - \bell_i^\top \bK = \mathbf{0}_n^\top$.
    Hence, we necessarily have $\mu_i = \alpha_i d_i$ for all $i\in[\bar p]$, and $\bar \bmu = \bzero$.
    Further, since $\bx\mapsto (\bpsi,\boldsymbol{\eta})$ and the input decomposition $\bu = \bL \boldsymbol{\nu} +\bL_\perp \tilde \bu$ are invertible linear mappings, there exist $\bK_1\in\real^{(m-p)\times r}$ and $\bK_2\in\real^{(m-p)\times (n-r)}$ such that $\tilde{\bu} = -\bar{\bK}_1\bpsi -\bar{\bK}_2\boldsymbol{\eta}$, and
    the dynamics~\eqref{eq:dynamics-normal-form} take the form: 
    \begin{align}\label{eq:closed-loop-normal-form}
        \begin{pmatrix}
            \dot{\bpsi} \\
            \dot{\boldsymbol{\eta}}
        \end{pmatrix} = 
        \underbrace{
        \begin{pmatrix}
            \bPsi & \mathbf{0}_{r\times(n-r)} \\
            \boldsymbol{\Gamma}_{\bpsi} - \bT_{\boldsymbol{\eta}}^\top \bB \bL_\perp \bar{\bK}_1 & \boldsymbol{\Gamma}_{\boldsymbol{\eta}} - \bT_{\boldsymbol{\eta}}^\top \bB \bL_\perp \bar{\bK}_2
        \end{pmatrix}
        }_{\boldsymbol{\Gamma}}
        \begin{pmatrix}
            \bpsi \\ \boldsymbol{\eta}
        \end{pmatrix}.
    \end{align}
    Note that $\bPsi$ is Hurwitz (because its diagonal blocks~$\bPsi_i$ are Hurwitz).
    Hence, $\boldsymbol{\Gamma}$ is Hurwitz if and only if there exists $\bar{\bK}_2$ such that $\boldsymbol{\Gamma}_{\boldsymbol{\eta}} - \bT_{\boldsymbol{\eta}}^\top \bB \bL_\perp \bar{\bK}_2$ is Hurwitz. 
    This occurs if and only if $(\boldsymbol{\Gamma}_{\boldsymbol{\eta}}, \bT_{\boldsymbol{\eta}}^\top \bB \bL_\perp)$ is stabilizable.
    Finally, note that since $\bx \mapsto (\bpsi,\boldsymbol{\eta})$ and $\bu\mapsto(\boldsymbol{\nu},\tilde{\bu})$ are invertible, $(\boldsymbol{\Gamma}_{\boldsymbol{\eta}},\bT_{\boldsymbol{\eta}}^\top \bB\bL_\perp)$ is stabilizable if and only if there exists $\tilde{\bK}$ such that $\bA - \bB(\bK_0 + \bL_\perp\tilde{\bK})$ is Hurwitz, which is equivalent to $(\bA-\bB\bK_0,\bB\bL_\perp)$ being stabilizable.
\end{proof}

Theorem~\ref{thm:conditions-stabilizability} provides a characterization of the set of linear stabilizing controllers that satisfy~\eqref{eq:hocbf-linear}. It also provides necessary and sufficient conditions under which this set is non-empty.
In the case $m=p$ (if~\ref{it:di-condition} and~\ref{it:compatibility-condition} hold), it ensures that $\bu = -\bK_0 \bx$ is the only linear controller satisfying~\eqref{eq:hocbf-linear} globally, and that it is stabilizing if and only if $\bGamma_{\boldsymbol{\eta}}$ is Hurwitz.

\begin{remark}\longthmtitle{Nonlinear controllers}\label{rem:nonlinear-controllers}
    {\rm
    When conditions in Theorem~\ref{thm:conditions-stabilizability} do not hold, nonlinear safe stabilizing controllers may still exist.
    In fact,~\cite[Theorem 3.6]{LQT-MKC:14} characterizes the exact conditions under which system~\eqref{eq:dynamics-normal-form} (with the constraint $\mu_i \geq 0$ for all $i\in[p]$) is stabilizable (with potentially nonlinear locally integrable open-loop control signals).
    Unfortunately, a constructive method to obtain such controller is not provided. 
    However, the conditions in~\cite[Theorem 3.6]{LQT-MKC:14} can be used to rule out the stability of 
    CBF-based safety filters (cf.~\cite{PM-YC-EDA-JC:26-jnls,PM-SSM-ADA:26}).
    }
    \demo
\end{remark}

\section{Optimal and Margin-Certified  Controllers}

We now address Problems~\ref{prob:optimality} and~\ref{prob:problem-robustness}. 
The following result shows that Problem~\ref{prob:optimality} can be solved through a single ARE. 

\begin{theorem}\longthmtitle{Optimal safe and stable linear controller}
\label{thm:optimal-globally-safe-linear-controller}
Suppose that the necessary and sufficient conditions for linear safe stabilization in Theorem~\ref{thm:conditions-stabilizability} hold and that $p<m$.
Given $\bQ \in \mathbb{S}_{++}^n$, $\bR \in \mathbb{S}_{++}^m$, define
$\bA_s = \bA-\bB\bK_0$, $\bB_s = \bB \bL_\perp$, $\bQ_s=\bQ+\bK_0^\top\bR\bK_0$, $\bS_s=-\bK_0^\top \bR \bL_\perp$, $\bR_s:=\bL_\perp^\top\bR\bL_\perp$. 
Let $\bP=\bP^\top\succ0$ denote the unique solution
of the generalized ARE:
\[
    \bA_s^\top\bP+\bP\bA_s
    -
    (\bP\bB_s+\bS_s)\bR_s^{-1}
    (\bB_s^\top\bP+\bS_s^\top)
    +
    \bQ_s
    =
    \mathbf{0}_{n\times n}.
\]
Then, the solution for Problem~\ref{prob:optimality} is given by:
\[
    \bK^\star
    =
    \bK_0
    +
    \bL_\perp\bR_s^{-1}
    (\bB_s^\top\bP+\bS_s^\top).
\]
\end{theorem}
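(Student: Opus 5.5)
By Theorem~\ref{thm:conditions-stabilizability}, the feasible set is exactly $\mathcal{M}=\{\bK_0+\bL_\perp\tilde{\bK}:\ \bA-\bB\bK \text{ Hurwitz}\}$. The plan is to reparametrize Problem~\ref{prob:optimality} as an unconstrained LQR problem with a cross term in the reduced input $\tilde{\bu}\in\real^{m-p}$, and then invoke classical LQR theory. Substituting $\bu=-\bK_0\bx+\bL_\perp\tilde{\bu}$ into~\eqref{eq:linear-system} gives
\[
\dot{\bx}=\bA_s\bx+\bB_s\tilde{\bu},
\]
where linear feedback $\tilde{\bu}=-\tilde{\bK}\bx$ corresponds exactly to $\bK\in\mathcal{M}$. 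Expanding the running cost gives
\[
\bx^\top\bQ\bx+\bu^\top\bR\bu=\bx^\top\bQ_s\bx+2\bx^\top\bS_s\tilde{\bu}+\tilde{\bu}^\top\bR_s\tilde{\bu}.
\]
Here $\bR_s\succ0$ because $\bL_\perp$ has full column rank and $p<m$. Problem~\ref{prob:optimality} is therefore equivalent to minimizing the quadratic cost with cross term $\bS_s$ over stabilizing linear gains $\tilde{\bK}$.

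The next step is to remove the cross term via $\tilde{\bu}=\bv-\bR_s^{-1}\bS_s^\top\bx$. This yields the system $\dot{\bx}=(\bA_s-\bB_s\bR_s^{-1}\bS_s^\top)\bx+\bB_s\bv$ with cost weights $\bQ_s-\bS_s\bR_s^{-1}\bS_s^\top$ and $\bR_s$. We need this state weight to be positive definite. By a Schur complement argument, the joint weight $\begin{pmatrix}\bQ_s&\bS_s\\ \bS_s^\top&\bR_s\end{pmatrix}$ equals $\bM^\top\mathrm{blkdiag}(\bQ,\bR)\bM$ with $\bM=\begin{pmatrix}\bI&\bzero\\ -\bK_0&\bL_\perp\end{pmatrix}$, which has full column rank. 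Hence the joint weight is positive definite, and so is the Schur complement $\bQ_s-\bS_s\bR_s^{-1}\bS_s^\top$. Stabilizability of $(\bA_s,\bB_s)$ is condition 3 of Theorem~\ref{thm:conditions-stabilizability}, and it is preserved under the feedback shift. Positive definiteness of the state weight gives detectability. Standard LQR theory then provides a unique stabilizing $\bP\succ0$ solving the transformed ARE, which expands back to the stated generalized ARE. The optimal feedback is $\tilde{\bu}=-\bR_s^{-1}(\bB_s^\top\bP+\bS_s^\top)\bx$, i.e., $\tilde{\bK}^\star=\bR_s^{-1}(\bB_s^\top\bP+\bS_s^\top)$, and thus $\bK^\star=\bK_0+\bL_\perp\tilde{\bK}^\star$.

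For optimality for every $\bx_0$, I would use the completion-of-squares identity along trajectories under any stabilizing $\tilde{\bK}$:
\[
J=\bx_0^\top\bP\bx_0+\int_0^\infty \big\|\bR_s^{1/2}(\tilde{\bu}+\bR_s^{-1}(\bB_s^\top\bP+\bS_s^\top)\bx)\big\|^2dt .
\]
The boundary term vanishes because $\bx(t)\to0$. This shows $J(\bx_0,\cdot)\ge\bx_0^\top\bP\bx_0$, with equality at $\bK^\star$, independently of $\bx_0$. The identity holds even among all admissible inputs, so in particular among $\mathcal{M}$. Finally, $\bK^\star\in\mathcal{M}$ because the stabilizing ARE solution makes $\bA_s-\bB_s\tilde{\bK}^\star=\bA-\bB\bK^\star$ Hurwitz, and it lies in the safe family by Proposition~\ref{prop:linear-controllers-satisfying-HOCBF-general}.

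I expect the main obstacle to be justifying existence and uniqueness of the positive definite stabilizing ARE solution. This relies on the cross-term reduction and on positive definiteness of $\bQ_s-\bS_s\bR_s^{-1}\bS_s^\top$, so the Schur complement step via $\bM$ is the key point. The remaining steps are routine algebra.
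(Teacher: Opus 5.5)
Your proposal is correct and follows essentially the same route as the paper: reparametrize with $\bu=-\bK_0\bx+\bL_\perp\tilde{\bu}$, note $\bR_s\succ0$, get stabilizability of $(\bA_s,\bB_s)$ from condition 3 of Theorem~\ref{thm:conditions-stabilizability}, deduce $\bQ_s-\bS_s\bR_s^{-1}\bS_s^\top\succ0$ from positive definiteness of the joint weight via a Schur complement, and invoke LQR theory with a cross term. Your factorization $\bM^\top\mathrm{blkdiag}(\bQ,\bR)\bM$ and your completion-of-squares identity only make explicit what the paper asserts directly or cites from standard generalized LQR results, including optimality for every $\bx_0$.
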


\begin{proof}
From Theorem~\ref{thm:conditions-stabilizability}, the feedback is given by $\bu=-\bK_0\bx+\bL_\perp\tilde{\bu}$,
where $\tilde{\bu}\in\real^{m-p}$ is a new linear control input, rewriting the cost function as: 
\begin{align*}
    \bx^\top\bQ\bx+\bu^\top\bR\bu = \begin{bmatrix}\bx^\top & \tilde{\bu}^\top \end{bmatrix}
    \underbrace{\begin{bmatrix}
        \bQ_s & \bS_s \\
        \bS_s^\top & \bR_s
    \end{bmatrix}}_{\bM_s} \begin{bmatrix}
        \bx \\ \tilde \bu
    \end{bmatrix}.
\end{align*}
Since $\bR\succ0$ and $\bL_\perp$ has full column rank, 
$\bR_s \succ 0$.
Thus, Problem~\ref{prob:optimality} is a continuous-time LQR problem for
$\dot{\bx}=\bA_s\bx+\bB_s\tilde{\bu}$, with a state-control cross
term~\cite[Section 3.4]{BDOA-JBM:90}. Moreover,
Theorem~\ref{thm:conditions-stabilizability} implies that
$(\bA_s,\bB_s)$ is stabilizable. Since the original running cost is
positive definite in $(\bx,\tilde{\bu})$, we have $\bM_s\succ0$, and therefore,
by the Schur complement,
$\bQ_s-\bS_s\bR_s^{-1}\bS_s^\top\succ0$. Hence, standard generalized
LQR theory guarantees a unique stabilizing solution
$\bP=\bP^\top\succ0$ of the generalized ARE, yielding
$\tilde{\bu}^\star=-\bR_s^{-1}(\bB_s^\top\bP+\bS_s^\top)\bx$.
Finally we have ${\bu} = -\bK_0 \bx + \bL_\perp \tilde{\bu}^\star = -\bK^\star \bx$.
\end{proof}



Theorem~\ref{thm:optimal-globally-safe-linear-controller} gives the LQR-optimal controller within the class of globally safe linear feedback laws. 
Unfortunately, since the proof relies on solving an ARE for a modified system with input matrix $\bB\bL_\perp$ (instead of $\bB$), the standard phase and gain margins associated with LQR controllers (cf.~\cite[Section 14.4]{KZ-JD-KG:95}) do not necessarily hold, since the input disturbance could enter through channels outside the span of $\bL_\perp$.
Next, we design a controller that globally satisfies the HOCBF constraints
and has certified gain and phase margins, i.e., solving
Problem~\ref{prob:problem-robustness}.
%
The following result gives a convex synthesis condition for such robust controller.

\begin{proposition}\longthmtitle{Safe controller with certified stability margins}\label{prop:robust-safe-linear-synthesis}
Suppose that the necessary and sufficient conditions for linear safe stabilization in Theorem~\ref{thm:conditions-stabilizability} hold.
For a given $\gamma>1$, Problem~\ref{prob:problem-robustness} admits a solution if and only if there exist $\bX \in \mathbb{S}_{++}^n$, $\mathbf{Y}\in\real^{m\times n}$ such that:
\begin{subequations}
\begin{equation}
\begin{bmatrix}
\bA\mathbf{X}+\mathbf{X}\bA^\top-\bB\mathbf{Y}-\mathbf{Y}^\top\bB^\top 
& \bB 
& -\mathbf{Y}^\top\\
\bB^\top 
& -\gamma\bI_m 
& \bI_m\\
-\mathbf{Y} 
& \bI_m 
& -\gamma\bI_m
\end{bmatrix}
\prec 0,
\label{eq:robust-safe-lmi}
\end{equation}
\begin{equation}
\bL^{\top}\mathbf{Y} = \tilde{\bC}\mathbf{X}.
\label{eq:robust-safe-equality}
\end{equation}
\end{subequations}
In which case, $\bK=\bY\bX^{-1}$ solves Problem~\ref{prob:problem-robustness}.
\end{proposition}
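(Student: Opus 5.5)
The proof combines two facts. Theorem~\ref{thm:conditions-stabilizability} describes the safe gains as an affine set, and the Bounded Real Lemma (BRL) turns the sensitivity bound into a matrix inequality. Throughout, $\bK$ and $\bY$ are related by the change of variables $\bY=\bK\bX$ with $\bX\succ0$. The affine safety constraint then becomes linear in $(\bX,\bY)$. Likewise, the BRL inequality for the closed loop becomes an LMI after a congruence and a Schur complement.

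\emph{Safety constraint.} By Theorem~\ref{thm:conditions-stabilizability} (and Proposition~\ref{prop:linear-controllers-satisfying-HOCBF-general}), $\bK\in\mathcal{M}$ iff two things hold: $\bA-\bB\bK$ is Hurwitz, and $\bK=\bK_0+\bL_\perp\tilde{\bK}$ for some $\tilde{\bK}$. Under conditions 1--2, the second requirement is equivalent to $\bL^\top\bK=\tilde{\bC}$. Necessity: $\bL^\top\bL_\perp=\bzero$ and $\bL^\top\bK_0=\tilde{\bC}$. Sufficiency: $[\bL\ \bL_\perp]$ is invertible, as in the proof of Proposition~\ref{prop:linear-controllers-satisfying-linearly-independent-hocbf}. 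The constraints for indices $i>p$ are then automatic by the compatibility condition. Since $\bX$ is invertible, $\bL^\top\bK=\tilde{\bC}$ is equivalent to $\bL^\top\bY=\tilde{\bC}\bX$ with $\bY=\bK\bX$, which is \eqref{eq:robust-safe-equality}.

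\emph{Margin constraint.} $\bS_{\bK}$ has the realization $(\bA_{cl},\bB,-\bK,\bI)$ with $\bA_{cl}=\bA-\bB\bK$. The strict BRL (for instance the form in~\cite{KZ-JD-KG:95}) states the following. $\bA_{cl}$ is Hurwitz and $\norm{\bS_{\bK}}_\infty<\gamma$ iff there exists $\bX\succ0$ with
\[
\begin{bmatrix}
\bA_{cl}\bX+\bX\bA_{cl}^\top & \bB & \bX\bC_{cl}^\top\\
\bB^\top & -\gamma\bI & \bD^\top\\
\bC_{cl}\bX & \bD & -\gamma\bI
\end{bmatrix}\prec0,\qquad \bC_{cl}=-\bK,\ \bD=\bI.
\]
This is the dual (controllability-Gramian) form, obtained from the primal form by congruence with $\text{blkdiag}(\bP^{-1},\bI,\bI)$ and $\bX=\bP^{-1}$. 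Substituting $\bK\bX=\bY$ turns $\bA_{cl}\bX+\bX\bA_{cl}^\top$ into $\bA\bX+\bX\bA^\top-\bB\bY-\bY^\top\bB^\top$, and $\bC_{cl}\bX$ into $-\bY$. This gives exactly \eqref{eq:robust-safe-lmi}. The Hurwitz property comes for free: the $(1,1)$ block gives $\bA_{cl}\bX+\bX\bA_{cl}^\top\prec0$ with $\bX\succ0$, a Lyapunov inequality.

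\emph{Combining.} ($\Rightarrow$) Suppose $\bK\in\mathcal{M}$ with $\norm{\bS_{\bK}}_\infty<\gamma$. The BRL supplies $\bX\succ0$, and $\bY:=\bK\bX$ satisfies both \eqref{eq:robust-safe-lmi} and \eqref{eq:robust-safe-equality}. ($\Leftarrow$) Given a feasible pair $(\bX,\bY)$, set $\bK=\bY\bX^{-1}$. Then \eqref{eq:robust-safe-equality} gives $\bL^\top\bK=\tilde{\bC}$, so $\bK$ is globally safe. The LMI gives Hurwitz stability and $\norm{\bS_{\bK}}_\infty<\gamma$, hence $\bK\in\mathcal{M}$.

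\emph{Main obstacle.} The delicate step is invoking the correct strict version of the BRL with a nonzero feedthrough $\bD=\bI$. That version requires $\sigma(\bD)<\gamma$, which holds because $\gamma>1$. One must also check that both directions hold with strict inequalities and that the dual Gramian form is the right one. I would verify this by applying a Schur complement to the $(3,3)$ block. This reduces the claim to the standard Riccati-inequality form, which is equivalent to the frequency-domain bound via the KYP lemma.
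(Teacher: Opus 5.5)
Your proposal is correct and follows essentially the same route as the paper. Both use the change of variables $\bY=\bK\bX$, the strict bounded-real lemma for the realization $(\bA-\bB\bK,\bB,-\bK,\bI)$ of $\bS_{\bK}$ (with Hurwitz stability read off from the $(1,1)$ block), and the equivalence, under conditions 1--2, of global HOCBF satisfaction with $\bL^\top\bK=\tilde{\bC}$. The only difference is cosmetic: you invoke the bounded-real LMI directly in its $\bX$-form, whereas the paper applies the congruence with $\operatorname{blkdiag}(\bX^{-1},\bI_m,\bI_m)$, divides by $\gamma$, and takes a Schur complement to reach the textbook primal inequality for $\gamma^{-1}\bS_{\bK}$ with $\bP=\bX^{-1}/\gamma$.
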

\begin{proof}
For $\bK=\bY\bX^{-1}$, let $\bA_{\bK}=\bA-\bB\bK$ and
${\bP}=\bX^{-1}/\gamma\succ0$. A congruence
transformation of~\eqref{eq:robust-safe-lmi} with
$\operatorname{blkdiag}(\bX^{-1},\bI_m,\bI_m)$, followed by
division by $\gamma$, gives
\[
\begin{bmatrix}
\bA_{\bK}^\top{\bP}+{\bP}\bA_{\bK}
& {\bP}\bB & -\bK^\top/\gamma\\
\bB^\top{\bP} & -\bI_m & \bI_m/\gamma\\
-\bK/\gamma & \bI_m/\gamma & -\bI_m
\end{bmatrix}\prec0.
\]
Taking the Schur complement of the lower-right block gives the
strict bounded-real LMI~\cite[Sec.~2.7.3]{SB-LEG-EF-VB:94}
for the realization
$(\bA_{\bK},\bB,-\bK/\gamma,\bI_m/\gamma)$ of
$\gamma^{-1}\bS_{\bK}$. Therefore, $\bA_{\bK}$ is Hurwitz and
$\norm{\bS_{\bK}}_\infty<\gamma$.
Under the assumptions of
Theorem~\ref{thm:conditions-stabilizability},
Proposition~\ref{prop:linear-controllers-satisfying-HOCBF-general}
implies that $\bu=-\bK\bx$ satisfies all HOCBF constraints globally
if and only if $\bL^\top\bK=\tilde{\bC}$. Since
$\bY=\bK\bX$, this is equivalent to
$\bL^\top\bY=\tilde{\bC}\bX$.

Conversely, suppose that Problem~\ref{prob:problem-robustness}
admits a solution $\bK\in\mathcal{M}$. By the strict bounded-real
lemma, there exists ${\bP}\succ0$ satisfying the LMI above.
Setting $\bX=(\gamma{\bP})^{-1}$ and $\bY=\bK\bX$ and
reversing the preceding transformations yields
\eqref{eq:robust-safe-lmi}. Moreover,
$\bL^\top\bK=\tilde{\bC}$ implies
\eqref{eq:robust-safe-equality}.
\end{proof}

Thus, a safe controller with certified stability margins can be computed
by minimizing $\gamma$ subject to~\eqref{eq:robust-safe-lmi}
and~\eqref{eq:robust-safe-equality}. The equality constraint enforces
nominal HOCBF safety, while the LMI certifies robust stability margins.
Alternatively, one can also fix a desired value of $\gamma$ and solve the feasibility problem of finding $\bX, \bY$ satisfying~\eqref{eq:robust-safe-lmi} and \eqref{eq:robust-safe-equality}.
\color{black}

\section{Numerical Example}

Here we apply the results of the paper to the roll-yaw dynamics of a mid-size aircraft around an operating point with velocity $717.17$ ft/sec, altitude 25000 ft and angle of attack $4.5627^\circ$~\cite[Section 14.8]{EL-KAW:24}.
The state is $\bx = [\beta, p_s, r_s]^\top$, where $\beta$ is sideslip (rad), and $p_s$, $r_s$ are roll and yaw rates (rad/s). The inputs are aileron and rudder deflections $\delta_a$ and $\delta_r$ (rad).
The dynamics are linear as in~\cite[Section 5.2]{PM-SSM-PO-LY-ED-JWB-ADA:26}.
\normalsize
We consider constraints on the roll rate $p_s$ of the form $\bc_1 = [0, -1, 0]$, $d_1 = 0.4$, $\bc_2 = [0, 1, 0]$, $d_2 = 0.4$.
Our goal is to regulate $p_s$ to a desired piecewise constant commanded signal $y_{\text{cmd}}:\real_{\geq0}\to\real$. To do so while respecting the safety constraints we project $y_{\text{cmd}}$ onto the safe set.
Given that there are multiple solutions to the equilibrium equation $\bA \bx_\star + \bB \bu_\star = \mathbf{0}_3$, with $\bx_\star = [x_{1,\star}, \bar{y}_{\text{cmd}}(t), x_{3,\star}]$ and $x_{1,\star}, x_{3,\star}, \bu_\star$ unknowns, we select the one with minimum norm.
In this case, $r_1 = 1$ and therefore the CBF zero dynamics in~\eqref{eq:dynamics-normal-form} are two-dimensional $(\boldsymbol{\eta}\in\real^2)$ with a one-dimensional control input ($\tilde{u}\in\real$).
In this case, the matrix $\bGamma_{\boldsymbol{\eta}}$ is Hurwitz and therefore the pair $(\bGamma_{\boldsymbol{\eta}}, \bT_{\boldsymbol{\eta}}^\top \bB \bL_\perp)$ is stabilizable, which by Theorem~\ref{thm:conditions-stabilizability} means that the set of linear controllers that globally exponentially stabilize the origin and satisfy the CBF condition is non-empty.
We implement the LQR problem in Theorem~\ref{thm:optimal-globally-safe-linear-controller} with $\bQ = \bI_3$ and $\bR = \bI_2$.
Figure~\ref{fig:aircraft-example-lqr} showcases the evolution of the state variables for a reference tracking task for this LQR controller.
\begin{figure}
    \centering
    \includegraphics[width=0.89\linewidth]{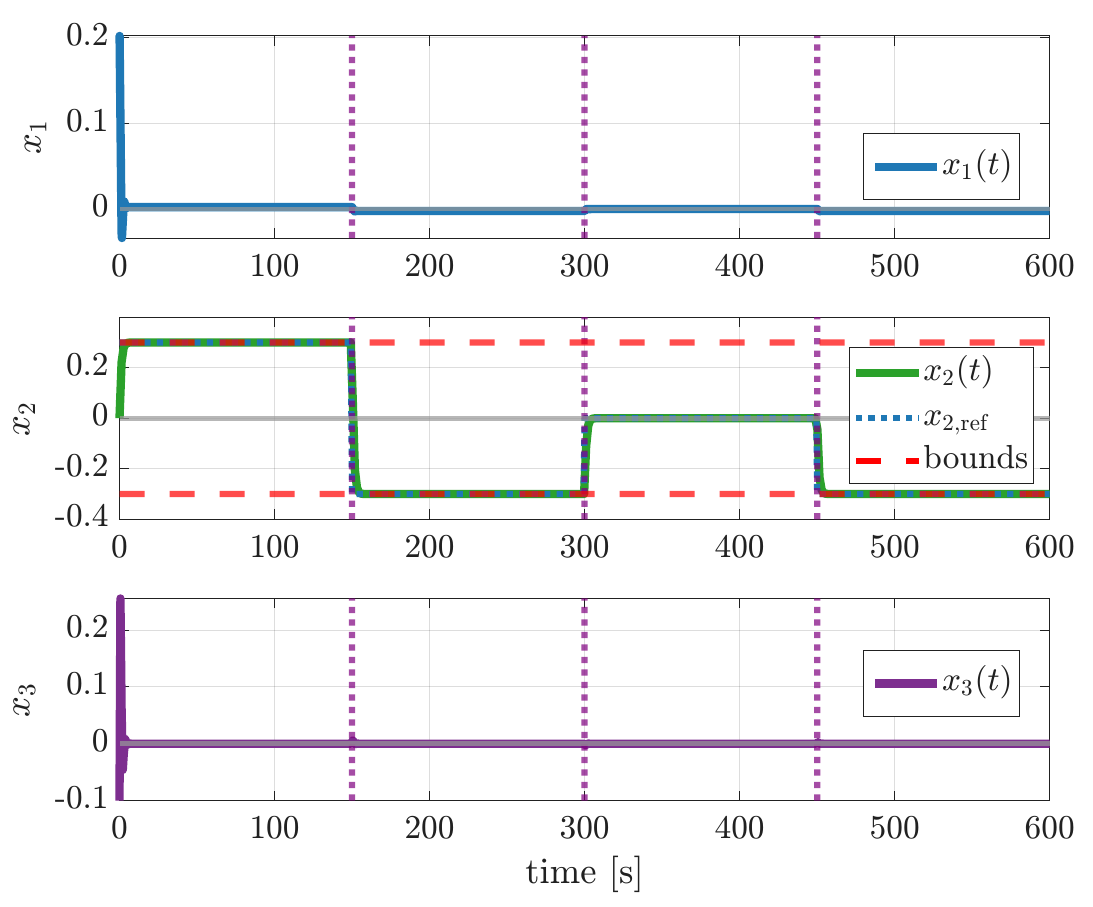}
    \caption{
    \footnotesize
    Reference tracking for roll-yaw dynamics example with LQR controller $\bK^\star = [ 0.8293, 0.0506, -0.3042; -0.1506, -0.0171, -0.8197]$.}
    \label{fig:aircraft-example-lqr}
\end{figure}
We also implement the robust controller from Proposition~\ref{prop:robust-safe-linear-synthesis} with $\gamma = 5$.
%
%
Although an upper bound on $\norm{\bS_{\bK}}_{\infty}$ of $\gamma = 5$ only guarantees gain margin in an interval $[0.83, 1.25]$ and a minimum phase margin of $11.48^\circ$ according to the formulas detailed in Section~\ref{sec:optimal-robust-linear-safe}, Figure~\ref{fig:aircraft_robust_margins} shows that in practice the gain and phase margins are up to $2$ and $60^\circ$ respectively.

\begin{figure}
    \centering
    \includegraphics[width=0.99\linewidth]{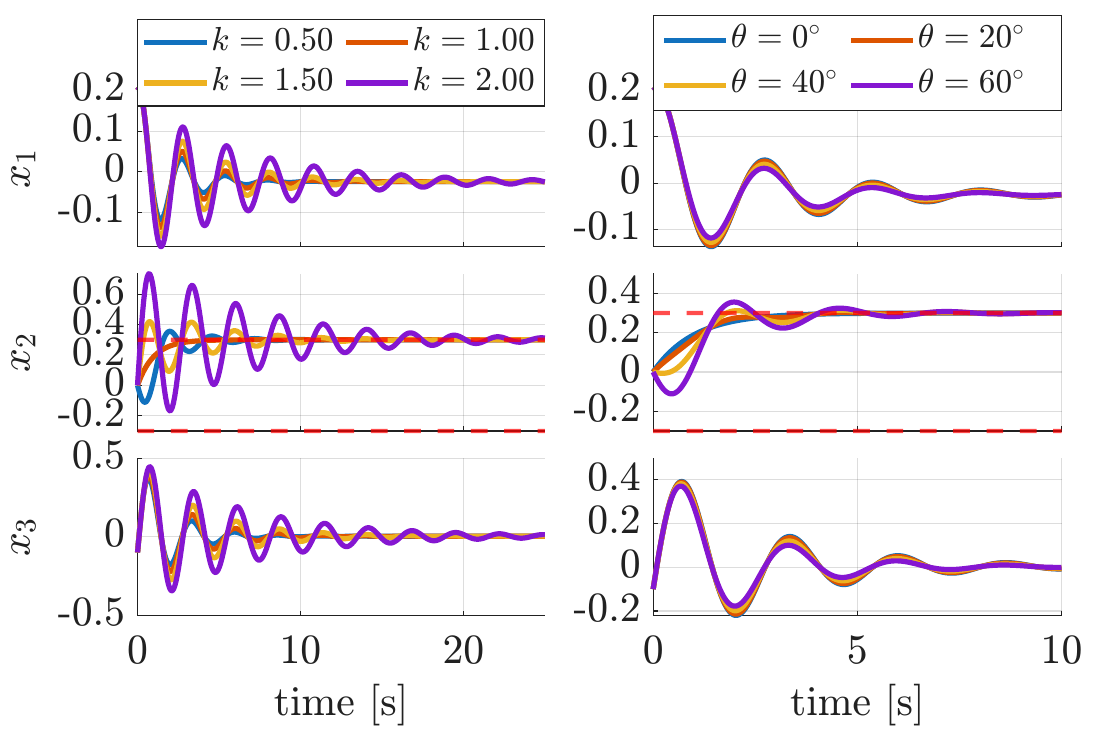}
    \caption{
    \footnotesize
    Gain (left) and phase (right) margin illustrations for the reference tracking roll-yaw dynamics example with the robust controller from Proposition~\ref{prop:robust-safe-linear-synthesis}, which is $\bK = [0.8732, 0.0551, -0.0674; -0.0205, -0.0037, -0.1176]$.}
    \label{fig:aircraft_robust_margins}
    \vspace{-15pt}
\end{figure}

\section{Conclusions}
We have studied the existence of linear controllers achieving simultaneous satisfaction of a set of HOCBF constraints and stabilization of a linear plant. By leveraging the concept of CBF output dynamics, we have provided an explicit characterization of such class of controllers, and necessary and sufficient conditions under which it is non-empty. Additionally, we have shown how to solve LQR and robust control problems within this class of controllers. 
Future work will seek to extend these ideas to nonlinear systems.

\bibliography{bib/alias,bib/Main-add,bib/Main,bib/JC,bib/PM,bib/New}
\bibliographystyle{IEEEtran}

\end{document}